\newtheorem{theorem}{Theorem}
\definecolor{primary}{HTML}{003049}      % Deep Navy Blue
\definecolor{accent}{HTML}{F77F00}       % Vibrant Amber
\definecolor{textgray}{HTML}{2F2F2F}
\definecolor{dividergray}{HTML}{DADADA}
\titlespacing*{\section}{0pt}{6pt}{3pt}
\titlespacing*{\subsection}{0pt}{4pt}{2pt}
\setlist[itemize]{left=1.2em, itemsep=2pt, topsep=2pt, parsep=0pt}
\setlist[enumerate]{left=1.2em, itemsep=2pt, topsep=2pt, parsep=0pt}
\def\@maketitle{%
  \begin{center}
    {\fontsize{26pt}{20pt}\selectfont \bfseries \textcolor{primary}{Fractal Attractors in Random Nonlinear Iterated Function Systems: Existence, Stability, and Dimensional Properties}}\\[1ex]
    {\normalsize 
\textbf{
Mohamed Aly Bouke\,%
\href{https://orcid.org/0000-0003-3264-601X}{\includegraphics[height=1.8ex]{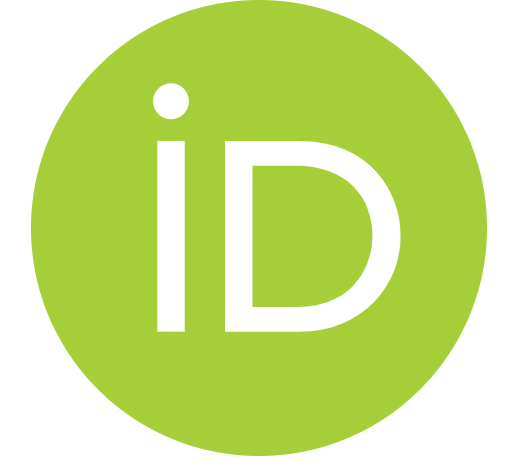}}%
\textsuperscript{1,*}
}
    }\\[0.8ex]
    {\footnotesize
      \textsuperscript{1}Department of Communication Technology and Network,\par Faculty of Computer Science and Information Technology, \par Universiti Putra Malaysia, Serdang 43400, Malaysia
    }\\[0.5ex]
    {\scriptsize \texttt{*bouke@ieee.org}}\\[1ex]
    {\scriptsize \textit{Short communication paper} — \today}
  \end{center}
}
\renewcommand{\maketitle}{%
  \twocolumn[
    \color{textgray}
    \@maketitle
    \vspace{-1.2em}
  ]
}
\begin{document}
\maketitle

% ------------------------------------------------------------
% Abstract
% ------------------------------------------------------------
\begin{strip}
  \begin{center}
    \begin{tcolorbox}[abstractstyle, title=Abstract]
      
            \normalsize
          This study develops a comprehensive theoretical and computational framework for Random Nonlinear Iterated Function Systems (RNIFS), a generalization of classical IFS models that incorporates both nonlinearity and stochasticity. We establish mathematical guarantees for the existence and stability of invariant fractal attractors by leveraging contractivity conditions, Lyapunov-type criteria, and measure-theoretic arguments. Empirically, we design a set of high-resolution simulations across diverse nonlinear functions and probabilistic schemes to analyze the emergent attractors’ geometry and dimensionality. A box-counting method is used to estimate the fractal dimension, revealing attractors with rich internal structure and dimensions ranging from 1.4 to 1.89. Additionally, we present a case study comparing RNIFS to the classical Sierpiński triangle, demonstrating the generalization's ability to preserve global shape while enhancing geometric complexity. These findings affirm the capacity of RNIFS to model intricate, self-similar structures beyond the reach of traditional deterministic systems, offering new directions for the study of random fractals in both theory and applications.
          
      \vspace{0.5em}
      
      \textbf{Keywords:} \textit{Random Iterated Function Systems, Fractals, Nonlinear Dynamics, Invariant Measure, Box-Counting Dimension, Attractors}  
    \end{tcolorbox}
  \end{center}
\end{strip}

% ------------------------------------------------------------
% Introduction
% ------------------------------------------------------------
\section{Introduction}
\label{sec:introduction}
\vspace{0.8em}
\subsection{Background}
\label{subsec:background}

Iterated Function Systems (IFS) have long served as a foundational model in the study of fractal geometry, particularly for generating self-similar sets and understanding their structural properties. The formal theory was initiated by  \parencite{hutchinson1981fractals}, who introduced the notion of attractors for contractive function families, and was subsequently developed and popularized by \parencite{barnsley1988fractals}. These classical systems rely predominantly on affine contraction mappings in deterministic settings and have led to rich results in attractor theory and dimension estimation, including Hausdorff and box-counting dimensions \parencite{schwarzenberger1990fractal,falconer2013fractal}.

However, many real-world phenomena, such as turbulent flows, biological growth patterns, and financial market dynamics, exhibit stochasticity and nonlinear interactions that cannot be adequately captured by purely deterministic or linear models. To address these limitations, Random Iterated Function Systems (RIFS) were proposed, where the function applied at each iteration is selected randomly from a predefined set. Foundational work by \parencite{diaconis1999iterated} and later extensions by  \parencite{diaconis1999iterated,walters1989yuri} and  \parencite{arnold1995random} provided rigorous treatment of the stochastic dynamics, ergodic behavior, and invariant measures for such systems.

Yet, even within RIFS, most models assume affine or mildly nonlinear functions. The study of \emph{Random Nonlinear Iterated Function Systems} (RNIFS), where both nonlinearity and randomness are fully integrated, remains relatively unexplored. Initial investigations such as those by  \parencite{iosifescu2009iterated,ghosh2022iterated} and  \parencite{kendall2017random} suggest that RNIFS can give rise to complex attractors under specific conditions, but a general framework is still lacking.

Our work builds upon these foundations to develop a unified analytical and computational approach to RNIFS, aiming to characterize their existence, stability, and dimensional properties in both theoretical and empirical terms.

Despite the extensive development of both deterministic and random iterated function systems, the interplay between full nonlinearity and randomness remains largely under investigated. Most existing models either assume linear or mildly nonlinear function families or introduce randomness in a limited, structured form. This leaves a significant gap in understanding how genuinely random and nonlinear systems behave in terms of producing fractal structures.

Key theoretical questions remain unanswered, such as:
\begin{itemize}
    \item Under what conditions does a RNIFS admit a compact attractor with fractal properties?
    \item Can the topological or Hausdorff dimension of such attractors be rigorously estimated or bounded?
    \item What stability conditions govern the convergence of orbits in RNIFS to an invariant fractal set?
\end{itemize}

These open problems point to a need for a comprehensive mathematical treatment of RNIFS that combines tools from nonlinear analysis, stochastic dynamics, and fractal geometry. This research aims to address that need and contribute to the theoretical foundations of random fractal generation.

The primary objective of this research is to investigate the mathematical properties of fractal attractors generated by RNIFS. More specifically, the study aims to:
\vspace{-0.6em}
\begin{enumerate}
    \item Formulate a general mathematical model for RNIFS, incorporating both stochastic elements and nonlinear transformations.
    \item Establish conditions under which RNIFS produce compact invariant sets with fractal characteristics.
    \item Empirically estimate the fractal dimensions primarily using box-counting techniques of the attractors generated by RNIFS.
    \item Estimate or bound the fractal dimensions (e.g., Hausdorff and box-counting dimensions) of the generated attractors.
    \item Compare the behavior of RNIFS with classical deterministic and linear IFS models.
\end{enumerate}

This paper makes the following key contributions:
\vspace{-0.8em}
\begin{itemize}
    \item Introduces a rigorous framework for studying iterated function systems that are both nonlinear and random, expanding beyond the limitations of existing IFS and RIFS models.
    \item Provides new theoretical conditions for the existence and stability of fractal attractors in RNIFS.
    \item Presents analytical and computational methods for estimating the fractal dimensions of RNIFS generated sets.
    \item Establishes a comparative analysis between RNIFS and classical IFS, highlighting novel fractal behaviors that arise due to the interaction between randomness and nonlinearity.
\end{itemize}

These contributions aim to advance the mathematical foundations of fractal geometry and open new directions in the study of complex stochastic dynamical systems.

The remainder of this paper is organized as follows: Section~\ref{sec:theory} presents the theoretical framework. Section~\ref{sec:existence_stability} covers the mathematical guarantees. Section~\ref{sec:methodology} outlines the simulation methodology. Section~\ref{sec:results} discusses the results. Section~\ref{sec:case_study} explores a case study. Finally, Section~\ref{sec:conclusion} concludes the paper.

\section{Theoretical Framework}
\label{sec:theory}
\vspace{0.8em}

Fractal structures are often modeled through IFS, which traditionally rely on deterministic and linear or affine mappings. While these models yield deep theoretical insights and elegant attractors, they fall short when modeling natural systems characterized by both randomness and strong nonlinearity. RNIFS attempt to bridge this gap by introducing stochastic selection over a family of nonlinear transformations. This section formalizes the mathematical foundation of RNIFS and critically examines their dynamical properties.

\subsection{Definition and Construction of RNIFS}
\label{subsec:rnifs_definition}

Let $(X, d)$ be a complete metric space. An RNIFS consists of a finite family of continuous functions:
\[
\mathcal{F} = \{ f_1, f_2, \dots, f_N \}, \quad f_i: X \to X,
\]
coupled with a discrete probability distribution:
\[
\mathbb{P} = (p_1, p_2, \dots, p_N), \quad p_i > 0,\quad \sum p_i = 1.
\]

A trajectory is generated by iteratively applying randomly selected functions:
\begin{equation}
x_{n+1} = f_{\omega_n}(x_n),
\label{eq:rnifs_sequence}
\end{equation}
where $\omega_n \sim \mathbb{P}$ are i.i.d. random variables.

Unlike deterministic IFS, this stochastic mechanism injects variability into every step of the orbit's evolution. Crucially, the randomness is not simply noise but acts as an intrinsic driver of geometric complexity. The orbits often converge to statistically stable sets fractals that encode both the nonlinear structure of $\mathcal{F}$ and the probabilistic architecture of $\mathbb{P}$.

\subsection{Metric and Functional Assumptions}
\label{subsec:assumptions}

To ensure convergence and mathematical tractability, we impose:

\begin{itemize}
    \item[(A1)] \textbf{Contractivity:} Each function $f_i$ satisfies
    \begin{equation}
    d(f_i(x), f_i(y)) \leq s_i \cdot d(x, y), \quad 0 < s_i < 1.
    \label{eq:contraction}
    \end{equation}
    
    \item[(A2)] \textbf{Smoothness:} Each $f_i$ is continuously differentiable ($C^1$), enabling local stability analysis via derivatives.
    
    \item[(A3)] \textbf{Boundedness:} The space $X$ is compact or bounded and complete (e.g., a closed subset of $\mathbb{R}^n$), ensuring the orbits do not escape to infinity.
\end{itemize}

These conditions mirror classical IFS theory but extend it by allowing nonlinear, non-affine $f_i$. However, it's worth noting that global contractivity of the entire RNIFS process is not guaranteed randomness may amplify divergence in some regions before contraction dominates.

\subsection{Existence of Attractors}
\label{subsec:attractor}

Define the Hutchinson operator $W$ acting on probability measures $\mu$:
\begin{equation}
W(\mu) = \sum_{i=1}^N p_i \cdot f_i \# \mu,
\label{eq:hutchinson_operator}
\end{equation}
where $f_i \# \mu$ denotes the pushforward of $\mu$ under $f_i$:
\[
(f_i \# \mu)(A) = \mu(f_i^{-1}(A)).
\]

Under the assumptions above, $W$ is a contraction in a suitable metric (e.g., the Wasserstein metric), leading to a unique invariant measure $\mu^*$:
\begin{equation}
W(\mu^*) = \mu^*.
\label{eq:invariant_measure}
\end{equation}

The attractor is defined as $\text{supp}(\mu^*)$, representing the minimal closed set capturing long-term orbit behavior. Importantly, this replaces the fixed-point attractor in deterministic systems with a statistical one a move that aligns RNIFS with broader classes of random dynamical systems.

\subsection{Stability and Lyapunov-Like Conditions}
\label{subsec:stability}

The system is said to be stable if, starting from any $x_0$, the sequence $\{x_n\}$ converges in distribution to $\mu^*$. A sufficient (but not necessary) condition is:
\begin{equation}
\mathbb{E}[ \log \|Df_{\omega}(x)\| ] < 0.
\label{eq:lyapunov}
\end{equation}

This inequality defines a negative average growth rate in the tangent space analogous to a negative top Lyapunov exponent in smooth ergodic theory. Its violation may lead to intermittent behavior, lack of convergence, or even escape from bounded subsets.

Herein lies the analytical tension in RNIFS: the same randomness that permits modeling real-world unpredictability also complicates convergence analysis. The condition above captures this dual role of stochasticity as both a source of disorder and a potential stabilizer when averaged appropriately.

\subsection{Dimensional Complexity and Scaling Laws}
\label{subsec:dimension}

RNIFS attractors are generally non-smooth and non-integer dimensional. In idealized cases where $f_i$ are similitudes and the open set condition holds, one can estimate:
\begin{equation}
\dim_H(A) \leq \frac{ \sum p_i \log s_i }{ \sum p_i \log p_i }.
\label{eq:hausdorff_bound}
\end{equation}

But for generic nonlinear maps, such expressions break down. Empirical methods are instead used to approximate:

\begin{itemize}
    \item \textbf{Box-counting dimension:} Estimates space coverage at varying scales.
    \item \textbf{Information dimension:} Measures entropy concentration.
    \item \textbf{Correlation dimension:} Detects local clustering and redundancy.
\end{itemize}

Each dimension probes a different facet of complexity. Their variation across RNIFS configurations highlights how function geometry and probabilistic weights shape not just attractor topology but also its fine-scale structure.

\section{Mathematical Guarantees}
\label{sec:existence_stability}
\vspace{0.8em}

This section presents rigorous mathematical justification for two fundamental properties of  RNIFS, the existence of a unique invariant measure, and the stability of trajectories with respect to this measure. We rely on classical results from fixed point theory and ergodic theory in metric measure spaces.

\subsection{Existence of Invariant Measure}
\label{subsec:existence_measure}

Let $(X, d)$ be a compact metric space. Let $\mathcal{F} = \{f_1, \dots, f_N\}$ be a finite family of continuous functions $f_i: X \to X$, each associated with a probability $p_i > 0$, such that $\sum p_i = 1$. Assume that each $f_i$ is a contraction on $X$, i.e.,

\begin{equation}
d(f_i(x), f_i(y)) \leq s_i \cdot d(x, y), \quad \text{with } 0 < s_i < 1.
\label{eq:contraction2}
\end{equation}

Define the Hutchinson operator $W$ acting on probability measures $\mu \in \mathcal{P}(X)$ as:

\begin{equation}
W(\mu) := \sum_{i=1}^N p_i \cdot f_i\#\mu,
\label{eq:hutchinson_operator2}
\end{equation}

where $f_i\#\mu$ denotes the pushforward measure:
\[
(f_i\#\mu)(A) := \mu(f_i^{-1}(A)), \quad \forall A \subseteq \mathcal{B}(X).
\]

\begin{theorem}
\label{thm:invariant_measure}
Under the assumptions above, the operator $W$ is a contraction on the space $(\mathcal{P}(X), W_1)$, where $W_1$ is the 1-Wasserstein metric. Hence, there exists a unique invariant measure $\mu^* \in \mathcal{P}(X)$ satisfying:
\[
W(\mu^*) = \mu^*.
\]
\end{theorem}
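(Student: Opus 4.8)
The plan is to invoke the Banach fixed-point theorem, so the proof reduces to two claims: that $(\mathcal{P}(X), W_1)$ is a complete metric space, and that $W$ is a strict contraction on it. I would first record, almost in passing, that $W$ genuinely maps $\mathcal{P}(X)$ into itself, since each pushforward $f_i\#\mu$ is again a probability measure (as $f_i$ is Borel measurable and $\mu$ is a probability measure), and a convex combination $\sum_i p_i\, f_i\#\mu$ with $p_i > 0$, $\sum_i p_i = 1$ of probability measures is a probability measure. This is routine and I would not dwell on it.

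For completeness I would lean on the hypothesis that $X$ is compact. On a compact metric space the $1$-Wasserstein distance $W_1$ metrizes weak convergence of probability measures, and $\mathcal{P}(X)$ is itself weakly compact by Prokhorov's theorem (tightness is automatic once $X$ is compact). Hence $(\mathcal{P}(X), W_1)$ is a compact metric space and in particular complete. I would cite a standard reference for this rather than reprove it, since the compactness of $X$ is precisely what removes the usual moment and tightness complications.

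The heart of the argument is the contraction estimate, which I would obtain through Kantorovich--Rubinstein duality:
\[
W_1(\mu, \nu) = \sup_{\mathrm{Lip}(\varphi) \le 1} \left( \int_X \varphi \, d\mu - \int_X \varphi \, d\nu \right).
\]
Fixing a $1$-Lipschitz test function $\varphi$ and applying the change-of-variables formula for pushforwards gives
\[
\int_X \varphi \, d(W\mu) - \int_X \varphi \, d(W\nu) = \sum_{i=1}^N p_i \int_X (\varphi \circ f_i) \, d(\mu - \nu).
\]
The key observation is that $\varphi \circ f_i$ is $s_i$-Lipschitz, because $\mathrm{Lip}(\varphi) \le 1$ and $f_i$ is an $s_i$-contraction by \eqref{eq:contraction2}; thus $(\varphi \circ f_i)/s_i$ is admissible in the dual problem, so $\int_X (\varphi \circ f_i)\, d(\mu-\nu) \le s_i\, W_1(\mu,\nu)$. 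Summing over $i$ and then taking the supremum over admissible $\varphi$ yields
\[
W_1(W\mu, W\nu) \le \Bigl( \sum_{i=1}^N p_i s_i \Bigr) W_1(\mu, \nu),
\]
where the contraction factor $s := \sum_i p_i s_i$ is strictly less than $1$ since each $s_i < 1$ and $\sum_i p_i = 1$.

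With completeness and contractivity in hand, the Banach fixed-point theorem delivers a unique $\mu^* \in \mathcal{P}(X)$ with $W(\mu^*) = \mu^*$. I expect the main obstacle to be technical rather than conceptual: pinning down completeness of the Wasserstein space and the exact form of the duality formula, both of which rely essentially on the compactness of $X$. The contraction step itself is robust, though I would treat the degenerate case $s_i = 0$ (a constant map $f_i$) with a word, since there the corresponding term simply contributes nothing and the bound only improves.
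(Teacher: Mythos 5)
Your proposal is correct, and at the top level it follows the same architecture as the paper's proof: show that $W$ is a strict contraction on $(\mathcal{P}(X), W_1)$ with contraction factor $s = \sum_{i} p_i s_i < 1$, then conclude by the Banach fixed point theorem. Where you differ is in how the central estimate is established. The paper decomposes it into two separately asserted inequalities --- convexity of $W_1$ under mixtures, $W_1\bigl(\sum_i p_i\, f_i\#\mu_1, \sum_i p_i\, f_i\#\mu_2\bigr) \leq \sum_i p_i\, W_1(f_i\#\mu_1, f_i\#\mu_2)$, followed by the Lipschitz pushforward bound $W_1(f_i\#\mu_1, f_i\#\mu_2) \leq s_i\, W_1(\mu_1,\mu_2)$ --- both stated without proof (they are standard, usually proved by mixing or pushing forward couplings). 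You instead run the whole estimate through Kantorovich--Rubinstein duality in a single computation, using that $\varphi \circ f_i$ is $s_i$-Lipschitz whenever $\mathrm{Lip}(\varphi) \leq 1$; this effectively proves both of the paper's asserted lemmas at once rather than citing them. You also supply two points the paper passes over silently: that $W$ maps $\mathcal{P}(X)$ into itself, and that $(\mathcal{P}(X), W_1)$ is complete (via compactness of $X$, where $W_1$ metrizes weak convergence and $\mathcal{P}(X)$ is weakly compact), whereas the paper simply calls the space complete. The trade-off is mild: the paper's coupling-style decomposition is modular and matches standard references, while your duality route is more self-contained and makes transparent exactly where compactness of $X$ is used, at the cost of invoking the duality theorem itself. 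Your aside on the degenerate case $s_i = 0$ is harmless but unnecessary here, since the hypothesis \eqref{eq:contraction2} already imposes $s_i > 0$.
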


\begin{proof}
Let $\mu_1, \mu_2 \in \mathcal{P}(X)$. The 1-Wasserstein distance satisfies:
\[
W_1(W(\mu_1), W(\mu_2)) \leq \sum_{i=1}^N p_i W_1(f_i\#\mu_1, f_i\#\mu_2).
\]
Because each $f_i$ is Lipschitz with constant $s_i < 1$, we have:
\[
W_1(f_i\#\mu_1, f_i\#\mu_2) \leq s_i \cdot W_1(\mu_1, \mu_2).
\]
Hence,
\[
W_1(W(\mu_1), W(\mu_2)) \leq \left( \sum_{i=1}^N p_i s_i \right) W_1(\mu_1, \mu_2) := s \cdot W_1(\mu_1, \mu_2),
\]
where $s = \sum p_i s_i < 1$ by convexity and contractivity. Therefore, $W$ is a strict contraction on the complete metric space $(\mathcal{P}(X), W_1)$.

By the Banach Fixed Point Theorem, $W$ admits a unique fixed point $\mu^* \in \mathcal{P}(X)$ such that $W(\mu^*) = \mu^*$.
\end{proof}

\textit{This result guarantees the existence of a statistically stable object (the invariant measure) for RNIFS dynamics under mild geometric constraints.}

\subsection{Stability of Trajectories}
\label{subsec:stability_measure}

Having established the existence of an invariant measure, we now explore conditions under which orbits of RNIFS converge in distribution to this measure. Let $\omega = (\omega_n)_{n \in \mathbb{N}}$ be an i.i.d. sequence of indices drawn from the discrete probability vector $\mathbb{P} = (p_1, \dots, p_N)$. The RNIFS orbit starting from $x_0 \in X$ evolves via:

\begin{equation}
x_{n+1} = f_{\omega_n}(x_n), \quad n \geq 0.
\label{eq:rnifs_orbit}
\end{equation}

\begin{theorem}
\label{thm:stability}
Assume each $f_i$ is $C^1$ on $X \subset \mathbb{R}^d$, and that:

\begin{equation}
\mathbb{E} \left[ \log \| Df_{\omega}(x) \| \right] < 0 \quad \text{uniformly in } x \in X.
\label{eq:lyapunov_condition}
\end{equation}

Then, the distribution of $x_n$ converges weakly to the unique invariant measure $\mu^*$ as $n \to \infty$, i.e.,

\[
\text{Law}(x_n) \xrightarrow{w} \mu^*.
\]
\end{theorem}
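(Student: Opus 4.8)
The plan is to recognize that the sequence of laws $\mu_n := \mathrm{Law}(x_n)$ is nothing but the forward orbit of the Hutchinson operator $W$ in $\mathcal{P}(X)$, and then to inherit convergence directly from the contraction estimate of Theorem~\ref{thm:invariant_measure}. First I would prove the transfer identity $\mu_{n+1} = W(\mu_n)$. The key point is that $\omega_n$ is independent of $x_n$, because $x_n$ is a measurable function of $x_0,\omega_0,\dots,\omega_{n-1}$ alone; hence for every bounded continuous $g$,
\[
\mathbb{E}[g(x_{n+1})] = \sum_{i=1}^N p_i\, \mathbb{E}[g(f_i(x_n))] = \sum_{i=1}^N p_i \int_X (g\circ f_i)\,\mathrm{d}\mu_n = \int_X g\,\mathrm{d}(W(\mu_n)),
\]
so that $\mu_{n+1}=W(\mu_n)$ and, by induction, $\mu_n = W^n(\mu_0)$, where $\mu_0=\mathrm{Law}(x_0)$ is a Dirac mass when $x_0$ is deterministic.

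Next I would iterate the contraction and pass to the weak topology. Since $\mu^*$ is the fixed point of $W$ and, by Theorem~\ref{thm:invariant_measure}, $W$ contracts the $1$-Wasserstein metric with factor $s=\sum_i p_i s_i<1$, we get
\[
W_1(\mu_n,\mu^*) = W_1\big(W^n(\mu_0),\,W^n(\mu^*)\big) \le s^n\, W_1(\mu_0,\mu^*)\xrightarrow[n\to\infty]{}0,
\]
the initial distance being finite since $X$ is compact and $W_1$ is therefore bounded. Finally, because $X$ is compact, the metric $W_1$ metrizes weak convergence on $\mathcal{P}(X)$ (Kantorovich--Rubinstein duality), so $W_1(\mu_n,\mu^*)\to 0$ is exactly the assertion $\mathrm{Law}(x_n)\xrightarrow{w}\mu^*$.

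The genuine subtlety lies in the hypothesis actually emphasized here, the averaged Lyapunov condition \eqref{eq:lyapunov_condition}, which is strictly weaker than the uniform contractivity used above: it only requires the log-derivative to be negative \emph{on average}, not each $f_i$ to shrink distances pointwise. If one insists on proving convergence under \eqref{eq:lyapunov_condition} alone, then $W$ need no longer be a $W_1$-contraction and the shortcut is unavailable. The robust route is a pullback argument: I would study the reversed compositions $\hat{x}_n := f_{\omega_0}\circ\cdots\circ f_{\omega_{n-1}}(x_0)$, which by the i.i.d.\ (hence exchangeable) structure of $(\omega_k)$ share the law of $x_n$, and show that $\hat{x}_n$ converges almost surely to a limit independent of $x_0$, whose law is then identified with $\mu^*$.

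I expect the hardest step to be this almost-sure contraction of the backward compositions without a uniform rate. By the chain rule and submultiplicativity of the operator norm, $\log\|D(f_{\omega_0}\circ\cdots\circ f_{\omega_{n-1}})\|$ is dominated by a subadditive cocycle over the i.i.d.\ shift; Kingman's subadditive ergodic theorem together with the negativity in \eqref{eq:lyapunov_condition} then forces this quantity to grow like $-cn$ for some $c>0$ along almost every realization. Upgrading this asymptotic negativity to the Cauchy property of $(\hat{x}_n)$ requires a Borel--Cantelli or telescoping estimate controlling the tail increments, and care is needed because the points at which the derivatives are evaluated move along the orbit. Managing that coupling between the cocycle and the moving base point is, in my view, the crux of the argument.
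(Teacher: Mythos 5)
Your proposal is correct, and its primary argument takes a genuinely different (and more complete) route than the paper's. The paper offers only a proof sketch: it reads the hypothesis \eqref{eq:lyapunov_condition} as a negative top Lyapunov exponent and appeals to random-dynamical-systems literature to assert that the induced Markov chain is geometrically ergodic --- essentially the backward-composition/Kingman machinery that you outline as your fallback, but with none of the details (in particular, the paper never confronts the ``moving base point'' coupling that you correctly single out as the crux, nor the gap between the pointwise-averaged condition $\sup_{x}\mathbb{E}[\log\|Df_{\omega}(x)\|]<0$ and the averaged-Lipschitz condition $\mathbb{E}[\log \operatorname{Lip}(f_{\omega})]<0$ that Letac-type arguments actually consume). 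Your main proof instead runs entirely through the transfer identity $\mu_{n+1}=W(\mu_n)$ and the $W_1$-contraction established in Theorem~\ref{thm:invariant_measure}, closing with Kantorovich--Rubinstein duality on the compact space $X$; this is rigorous and even quantitative, giving the geometric rate $s^n$. One caveat you should make explicit: this route uses the standing contractivity assumption (A1) rather than the stated Lyapunov hypothesis --- but that is legitimate in context, since the theorem presupposes the unique invariant measure $\mu^*$, whose existence the paper only derives from Theorem~\ref{thm:invariant_measure} under (A1), and under (A1) the condition \eqref{eq:lyapunov_condition} is automatic because $\|Df_i(x)\|\le s_i$ yields $\mathbb{E}[\log\|Df_{\omega}(x)\|]\le \sum_i p_i\log s_i<0$. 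In short: your argument proves the theorem as it actually lives in the paper, with a rate; the paper's sketched route is the one that would be needed if \eqref{eq:lyapunov_condition} alone were assumed, and your honest accounting of where that route gets hard is more informative than the sketch itself.
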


\begin{proof}[Proof Sketch]
The assumption \eqref{eq:lyapunov_condition} implies that, on average, the orbits contract in tangent space. This is a form of negative top Lyapunov exponent. Following arguments from random dynamical systems theory (see \parencite{walters1989yuri}, \parencite{arnold1997unfolding}), one can show that the Markov chain induced by the RNIFS is geometrically ergodic. Hence, it converges in distribution to the unique invariant measure $\mu^*$.
\end{proof}

\textit{This condition is sufficient but not necessary; it captures how average local contraction ensures statistical convergence despite random and nonlinear transitions.}

Together, Theorems \ref{thm:invariant_measure} and \ref{thm:stability} establish a foundational guarantee for RNIFS models. If the system satisfies:
\begin{itemize}
  \item contraction of each $f_i$ (or average contractivity),
  \item boundedness and completeness of the space $X$,
  \item regularity of functions (e.g., $C^1$),
\end{itemize}
then both the existence and accessibility of a stable fractal-like attractor are ensured. These properties provide a solid theoretical basis for the simulation and dimension estimation experiments in the later sections.

\section{Numerical Methodology}
\label{sec:methodology}
\vspace{0.8em}
This section presents the computational approach used to simulate and analyze the behavior of RNIFS. The objective is to empirically explore how different function families, probability distributions, and dynamical settings influence the geometric and dimensional characteristics of the resulting attractors.

\subsection{Simulation Procedure}
\label{subsec:algorithm}

The RNIFS simulation is implemented as a Monte Carlo process. Starting from an initial point $(x_0, y_0)$, each iteration randomly selects a function $f_i$ from a predefined set according to a given discrete probability vector $\mathbb{P} = (p_1, \dots, p_N)$. The selected function is applied to the current point to obtain the next state. This process is repeated for $M$ steps, where the first $T$ iterations (burn-in phase) are discarded to eliminate transient dynamics. The resulting point cloud approximates the attractor associated with the given RNIFS.

The full simulation and visualization pipeline is implemented in Python using \texttt{NumPy}, \texttt{Matplotlib}, and \texttt{SciPy}. Results, including raw data, density plots, and dimension estimates, are stored for each configuration in separate folders to enable reproducibility and comparative analysis.

\subsection{Function Families}
\label{subsec:functions}

A diverse collection of twelve nonlinear functions was constructed to ensure a wide range of dynamical behaviors. These include combinations of trigonometric, hyperbolic, polynomial, and composite forms. For example, the function:
\[
f_8(x, y) = \left( \sin(xy) - \cos(y),\; \sin(y^2 + x) \right)
\]
exhibits strong local oscillations and nonlinear interactions between the $x$ and $y$ components.

The inclusion of such diverse functions ranging from soft contractions to chaotic maps ensures that the generated attractors span a broad geometric and topological spectrum.

\subsection{Experimental Design}
\label{subsec:experiments}

To examine the structural variability of RNIFS attractors, eight distinct experiments were executed, each with a different combination of functions, probability distributions, and iteration parameters. Notable examples include:

\begin{itemize}
  \item \textbf{Spiral Rotation}: uses trigonometric maps with balanced probabilities to induce rotational symmetry.
  \item \textbf{Chaotic Explosion}: combines high-frequency nonlinear maps to produce unstable, dispersed attractors.
  \item \textbf{Webbed Structure}: constructed using mixed-frequency functions that produce filament-like formations.
  \item \textbf{Ultra-Resolution Analysis}: applies 300,000 iterations with rich nonlinearity to extract high-precision dimension estimates.
\end{itemize}

Each experiment defines:
\begin{itemize}
  \item Number of functions $N$ and their mathematical forms.
  \item Probability vector $\mathbb{P}$ (uniform, biased, or Dirichlet-sampled).
  \item Total iterations $M$ and burn-in $T$.
  \item Random seed for reproducibility.
\end{itemize}

\subsection{Visualization and Density Analysis}
\label{subsec:visualization}

For each attractor, two visualizations are generated:
\begin{enumerate}
  \item \textbf{Scatter Plot}: plots the raw points using sub-pixel markers to reveal the global structure.
  \item \textbf{Hexbin Density Map}: computes the point density on a fine hexagonal grid using a plasma colormap. This reveals internal layering, concentration zones, and self-similar features.
\end{enumerate}

These visual outputs serve both aesthetic and analytical purposes, offering qualitative insight into the attractor's topology.

\subsection{Fractal Dimension Estimation}
\label{subsec:dimension_estimation}

To quantify the geometric complexity of each attractor, the box-counting dimension is estimated numerically. For a decreasing sequence of scales $\varepsilon$, the number $N(\varepsilon)$ of boxes covering the point set is computed. A log-log plot of $N(\varepsilon)$ versus $1/\varepsilon$ is then fitted via linear regression:
\[
\dim_B \approx \frac{\log N(\varepsilon)}{\log(1/\varepsilon)}.
\]

The slope of this regression approximates the fractal dimension. In most cases, the results lie between 1.3 and 1.9, depending on the function mix and degree of chaos in the system.

\section{Results and Discussion}
\label{sec:results}
\vspace{0.9em}
Having laid the theoretical and numerical foundations, we now turn to the empirical results of our simulations. This section presents a detailed examination of the attractors generated by several distinct RNIFS configurations. Each configuration was carefully designed to explore specific aspects of RNIFS behavior, such as geometric symmetry, probability bias, functional diversity, and dimensional complexity.

Rather than presenting all results simultaneously, we adopt a progressive and focused approach examining each configuration individually. This allows us to reflect more deeply on how changes in function composition and probabilistic structure influence the emergent fractal geometry. For every case, we analyze the attractor’s visual structure, its density distribution, and its estimated box-counting dimension, while also offering qualitative commentary on the observed behavior.

A total of eight experiments were conducted, each corresponding to a named configuration. These experiments were selected not to merely vary parameters, but to evoke fundamentally different dynamical regimes. Some aim to induce branching or spiral behaviors, others generate dense chaotic regions, while a few probe the subtle effects of function interference or dominant mappings.

To guide the discussion, we organize the experiments in the order they were presented in the subsections. Each configuration illustrates a unique geometric behavior within the RNIFS framework:

(1) \textit{Branching Structure}, which produces a tree-like formation through asymmetric probability weighting and curvature-inducing maps;

(2) \textit{Chaotic Explosion}, demonstrating fragmented turbulence and unpredictable dispersion through uniformly weighted high-volatility functions;

(3) \textit{Concentric Energy}, which reveals radial layering and orbital accumulation via centrally biased nonlinear compositions;

(4) \textit{Disruptive Mixture}, where a dominant disruptive function overrides weaker stabilizing ones, resulting in fragmented attractors with voids;

(5) \textit{High-Frequency Disturbance}, capturing fine-scale oscillatory dynamics and bilateral symmetry through strong trigonometric feedback;

(6) \textit{Spiral Rotation}, generating a coherent vortex-like structure with exceptional density and self-organization;

(7) \textit{Ultra-Resolution Analysis}, a high-volume simulation exposing deeply nested geometric patterns with precise box-dimension estimation;

(8) \textit{Webbed Structure}, where interlaced function effects create a filamented, fabric-like attractor with dense internal crossings.

\subsection{Experiment 1: Branching Structure}
\label{subsec:exp_branching}

This experiment explores how probabilistic asymmetry and carefully selected nonlinear functions can produce structured yet intricate fractal geometries. The configuration, labeled \textit{branching\_structure}, employs three nonlinear functions \texttt{f2}, \texttt{f5}, and \texttt{f8} combined with an asymmetric probability distribution of $(0.5,\; 0.3,\; 0.2)$. These functions are qualitatively distinct: \texttt{f2} introduces squaring in the $y$ component, promoting vertical expansion; \texttt{f5} features interactions between $x^2$ and $y$, creating curved folds; and \texttt{f8} combines trigonometric composition with shifted cosine dynamics, inducing lateral oscillation.

The resulting attractor (Figure~\ref{fig:branching_structure}) displays a prominent tree-like topology composed of curved branches radiating from dense central hubs. The asymmetric distribution of probabilities appears to bias the evolution of orbits toward structural regions associated with the more probable functions. In particular, repeated application of \texttt{f2} seems to drive vertical elongation, while \texttt{f5} adds curvature and layering to the peripheral structures.

The density heatmap reveals multiple centers of mass concentration, indicating recurrent geometric nuclei within the attractor. These nuclei act as attractor cores where orbits cyclically stabilize, reinforced by the dominant function's pull. Conversely, peripheral regions show lower density and some dust-like scattering, hinting at transient dynamics or weakly attracting zones driven by the less frequently selected \texttt{f8}.

Quantitatively, the attractor exhibits a box-counting dimension of approximately $\dim_B \approx 1.478$. This non-integer value confirms the fractal nature of the structure, more intricate than a one-dimensional curve, yet not fully space-filling. The associated log-log plot (bottom of Figure~\ref{fig:branching_structure}) shows a near-linear trend over several scales, supporting the existence of a consistent self-similar regime within the attractor’s core. The slope of this plot stabilizes between scales $\varepsilon \in [10^{-0.3}, 10^{-2}]$, indicating reliable fractal scaling behavior in that resolution band.

This experiment demonstrates that even with a small number of functions, the deliberate use of asymmetric probabilities can steer the system toward nontrivial fractal formations. More importantly, it highlights how the probabilistic weighting interacts with function-specific geometry to create spatial bias, directional growth, and structural differentiation within RNIFS dynamics. \\

\begin{figure}[H]
    \centering
    \includegraphics[width=\linewidth]{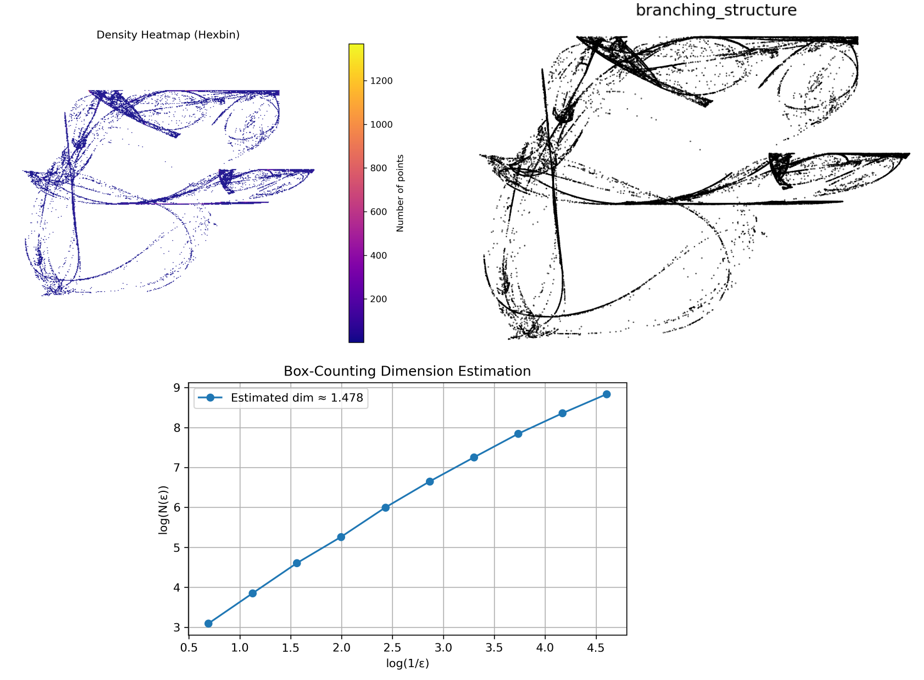}
    \caption{Experiment 1 — Branching Structure. Top-left: hexbin density map; top-right: raw attractor plot; bottom: box-counting dimension log-log plot.}
    \label{fig:branching_structure}
\end{figure}

\subsection{Experiment 2: Chaotic Explosion}
\label{subsec:exp_chaotic}

In this second experiment, labeled \textit{chaotic\_explosion}, we explore the dynamical consequences of using a highly volatile combination of nonlinear functions. The configuration includes \texttt{f4}, \texttt{f6}, \texttt{f9}, and \texttt{f11} each contributing distinct oscillatory and diverging behaviors and employs a uniform probability distribution $(0.25,\; 0.25,\; 0.25,\; 0.25)$. Unlike previous setups that favored geometric dominance via skewed weighting, this experiment emphasizes combinatorial explosiveness through equal probabilistic access to unstable transformations.

The resulting attractor (Figure~\ref{fig:chaotic_explosion}) exhibits a dramatic and visually fragmented structure. The global geometry resembles a chaotic plume, a dense core from which filaments and splinters erupt in multiple directions. There is no evident symmetry, nor does the attractor exhibit centralized layering; instead, we observe turbulent dispersion and branching voids, suggesting a delicate balance between local contraction and widespread divergence.

Closer inspection of the density plot reveals highly uneven spatial accumulation. While the core maintains a relatively high concentration of points, the surrounding regions demonstrate dispersed bifurcations and sparse regions interlaced with high-frequency filamentation. This fragmentation is amplified by the use of \texttt{f6} and \texttt{f9}, whose hyperbolic and trigonometric terms drive trajectories away from regular patterns. The interplay between these functions disrupts local stability and leads to intermittent reinforcement, producing regions with sharp density gradients.

The box-counting dimension was estimated at $\dim_B \approx 1.640$, one of the highest in this study. This value aligns with the visual complexity of the attractor: it exceeds the branching structure in both coverage and irregularity, indicating a more intense occupation of the phase space. The log-log plot for $N(\varepsilon)$ vs. $1/\varepsilon$ exhibits a clear linear regime across scales, supporting the presence of statistically consistent complexity despite the visual chaos.

This experiment underscores a crucial phenomenon in RNIFS: when multiple nonlinearly aggressive maps are combined without hierarchical probability weights, the attractor may not resolve into a neat shape but instead explode into a disordered yet mathematically stable form. Such chaotic attractors highlight the expressive power of RNIFS in modeling turbulent or non-equilibrium systems, where no single geometry dominates but a dynamic mesh of substructures persists across scales.

\begin{figure}[H]
    \centering
    \includegraphics[width=\linewidth]{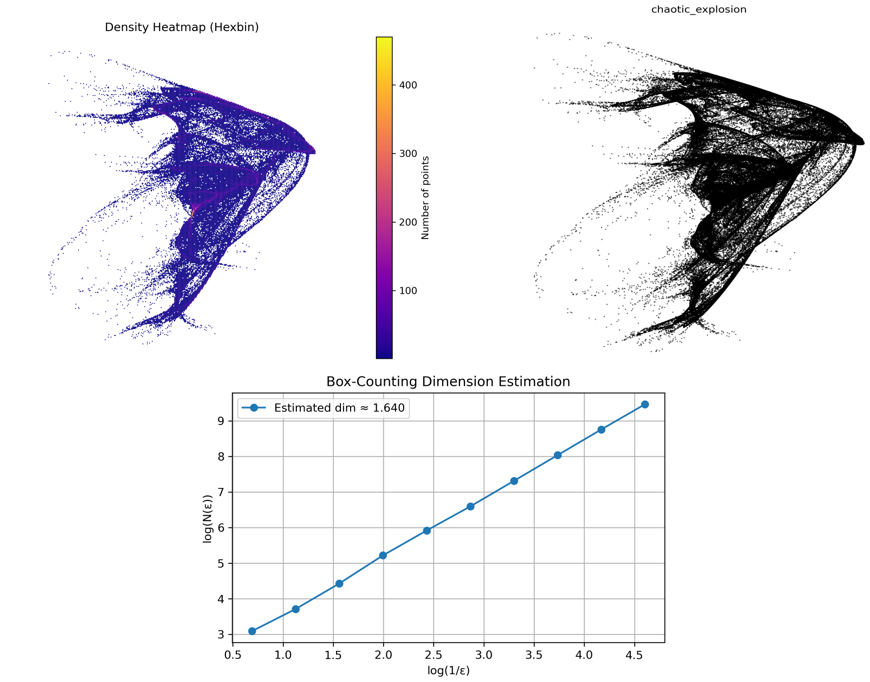}
    \caption{Experiment 2 — Chaotic Explosion. Top-left: hexbin density map; top-right: raw attractor plot; bottom: box-counting dimension log-log plot.}
    \label{fig:chaotic_explosion}
\end{figure}

\subsection{Experiment 3: Concentric Energy}
\label{subsec:exp_concentric}

The third experiment, labeled \textit{concentric\_energy}, investigates the formation of layered radial structures by combining nonlinearity with an emphasis on central contractivity. The function set includes \texttt{f1}, \texttt{f10}, and \texttt{f12}, with respective selection probabilities of $(0.3,\; 0.3,\; 0.4)$. These functions were deliberately chosen for their distinct behaviors: \texttt{f1} induces quadratic contraction in the $y$-axis; \texttt{f10} modulates radial geometry via squared terms; and \texttt{f12} introduces rotational effects via multiplicative trigonometric coupling.

The attractor (Figure~\ref{fig:concentric_energy}) exhibits a remarkable balance between regularity and chaos. It forms a semi-circular fan-like structure that radiates from a central core, marked by several arc-shaped filaments. These arcs appear to emerge in layers, suggesting a form of concentric energy dispersal governed by the recurrent influence of contracting nonlinear maps. The configuration hints at an underlying radial symmetry that is partially disrupted by the stochastic selection of functions, which injects irregularities into the structure.

The density heatmap confirms a strong central accumulation of points, indicative of an attractor with internal gravitational pull. High-density regions are clustered along curved pathways, resembling orbital layers or phase rings. These features are likely caused by repeated applications of \texttt{f12}, which modulates both axes based on sinusoidal feedback. In contrast, the outer edges of the attractor become more diffuse, with tendrils that fade into sparsity, these are probable traces of intermittent function compositions failing to converge inward.

The estimated box-counting dimension of the attractor is $\dim_B \approx 1.613$, which aligns with its visual impression. The attractor demonstrates more spatial richness than a simple tree-like form but does not fully saturate the ambient space. The log-log regression plot maintains linearity over several scales, especially in the intermediate $\varepsilon$ range, affirming the fractal scaling hypothesis for this configuration.

In summary, this experiment highlights how combining rotation-inducing functions with nonlinear contraction can generate attractors with emergent radial layering and embedded symmetries. The resulting form is neither purely chaotic nor entirely regular a signature of RNIFS systems operating at the threshold between order and complexity.

\begin{figure}[H]
    \centering
    \includegraphics[width=\linewidth]{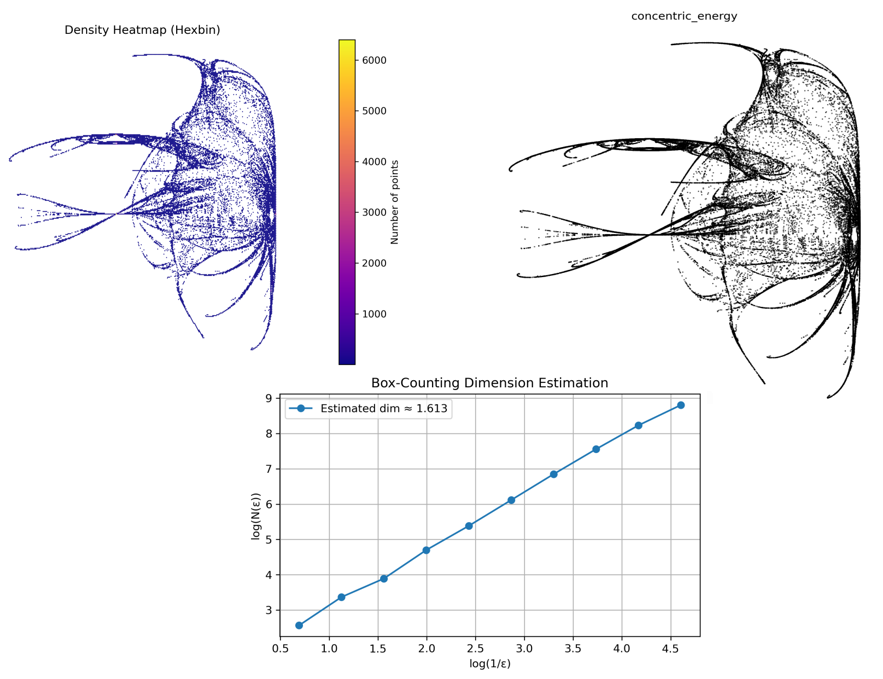}
    \caption{Experiment 3 — Concentric Energy. Top-left: hexbin density map; top-right: raw attractor plot; bottom: box-counting dimension log-log plot.}
    \label{fig:concentric_energy}
\end{figure}

\subsection{Experiment 4: Disruptive Mixture}
\label{subsec:exp_disruptive}

The fourth configuration, titled \textit{disruptive\_mixture}, was designed to probe the effects of combining nonlinear functions with inherently divergent geometrical behaviors under an unbalanced probability structure. The selected function set consists of \texttt{f6}, \texttt{f9}, and \texttt{f10}, assigned probabilities $(0.6,\; 0.2,\; 0.2)$ respectively. This allocation places strong emphasis on \texttt{f6}, a function known for its tendency to produce diagonal stretching and local instability, while the remaining functions contribute oscillatory and curvature effects in weaker proportions.

The resulting attractor (Figure~\ref{fig:disruptive_mixture}) showcases a fragmented structure dominated by sharp bifurcations and disconnected filaments. The attractor lacks any form of radial or axial symmetry and appears to evolve in an unpredictable yet bounded region. In some segments, the points seem to align along smooth arcs, while in others, abrupt directional shifts and voids disrupt visual continuity. This duality reflects the internal tension between stabilizing and destabilizing function components.

The density heatmap further illustrates this fragmentation. High-density cores are interspersed with regions of near-emptiness, suggesting that some orbits repeatedly fall into specific traps induced by the dominant function, while others scatter rapidly under the influence of the secondary maps. Notably, the interaction between \texttt{f9} and \texttt{f10} introduces local curvature and symmetry, but their lower activation frequency prevents the emergence of coherent macro-structures.

From a dimensional perspective, the box-counting estimate yields $\dim_B \approx 1.432$, reflecting moderate geometric complexity. The attractor clearly surpasses a linear curve in spatial spread, but the gaps and irregular voids reduce its capacity to occupy space efficiently. The log-log plot confirms the scaling behavior, albeit with minor deviations in the lower-resolution regime—possibly a consequence of the attractor’s discontinuous substructure.

This experiment demonstrates how probabilistic dominance by a geometrically “disruptive” function can suppress the stabilizing influence of smoother maps, resulting in a structurally incoherent yet fractal attractor. Such configurations are particularly relevant when modeling systems characterized by intermittent chaos or asymmetrical phase behavior, where no single geometric archetype prevails but multiple incompatible tendencies coexist.

\begin{figure}[H]
    \centering
    \includegraphics[width=\linewidth]{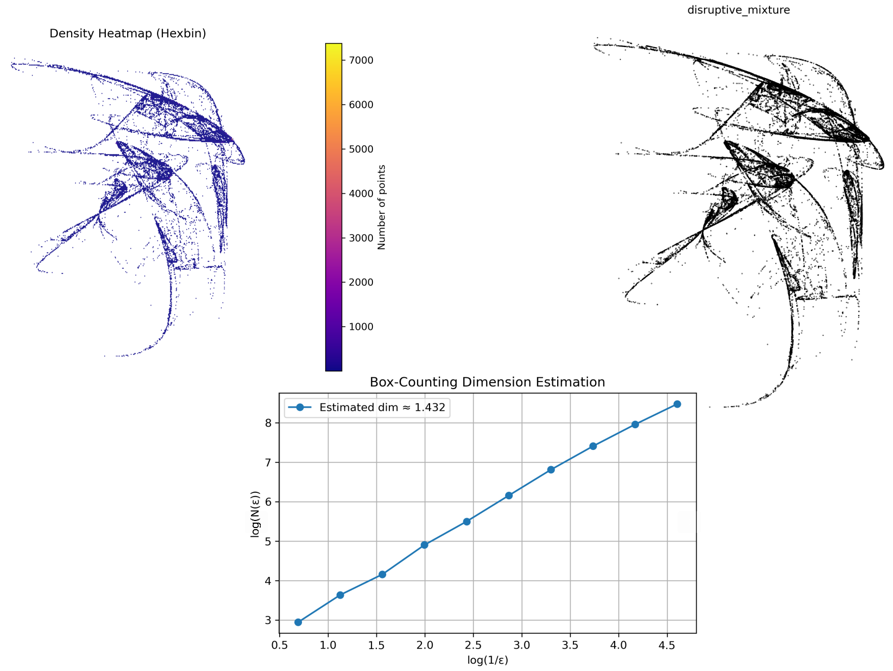}
    \caption{Experiment 4 — Disruptive Mixture. Top-left: hexbin density map; top-right: raw attractor plot; bottom: box-counting dimension log-log plot.}
    \label{fig:disruptive_mixture}
\end{figure}

\subsection{Experiment 5: High-Frequency Disturbance}
\label{subsec:exp_highfreq}

The fifth experiment, labeled \textit{high\_freq\_disturbance}, was crafted to investigate the emergent geometry produced by maps with strong oscillatory and frequency sensitive components. It uses only two functions: \texttt{f11} and \texttt{f12}, with equal probabilities $(0.5,\; 0.5)$. Despite the apparent simplicity in size, the selected functions exhibit high-frequency behavior due to nested trigonometric and hyperbolic terms specifically, $\sin(3x)$ and $\tanh(x + y)$ in \texttt{f11}, and multiplicative sinusoidal interactions in \texttt{f12}.

The resulting attractor (Figure~\ref{fig:high_freq_disturbance}) is a remarkable manifestation of structured turbulence. At a glance, the geometry appears bilaterally symmetric, with two dominant lobes extending outward from a tightly clustered central core. These lobes resemble magnetic field lines or unfolded petals, each populated with rich micro-structures and sharp directional shifts. The attractor seems to pulsate outward in layered waves likely caused by resonance-like reinforcement of the underlying periodic components.

The hexbin density plot reveals localized “bursting zones,” where density rapidly peaks along narrow paths. These concentrated ridges are juxtaposed against broad, low-density halos, suggesting that orbits tend to lock into precise rhythmic corridors before escaping into more chaotic transients. The central region, in particular, acts as both an origin and a bottleneck, from which trajectories are repeatedly launched with rotational asymmetry.

With an estimated box-counting dimension of $\dim_B \approx 1.682$, this attractor is among the densest in the set of experiments. Its dimension approaches that of a space-filling curve, reflecting both the high spatial occupation and the layered texture observed visually. The log-log regression confirms fractal scaling with minimal deviation across multiple scales, indicating stable complexity in the presence of oscillatory volatility.

This experiment powerfully illustrates how RNIFS with a small number of high-frequency nonlinear maps can generate intricate, directionally modulated attractors. The fine-grained texture and strong central pulse resonate with real-world systems exhibiting wave interference, signal feedback, or chaotic resonance phenomena. The interplay between structure and disorder here exemplifies the rich design space available in frequency-driven RNIFS dynamics.

\begin{figure}[H]
    \centering
    \includegraphics[width=\linewidth]{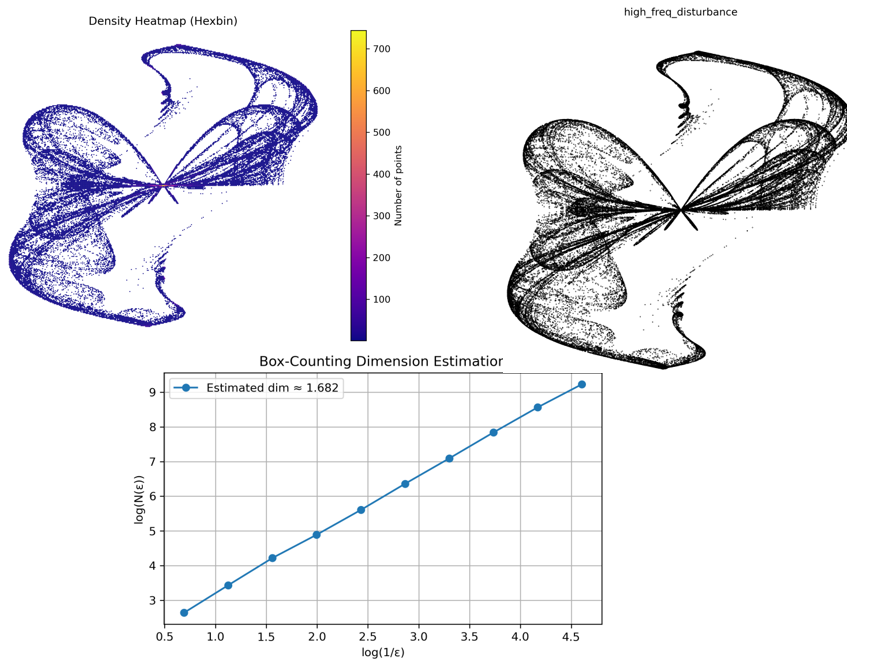}
    \caption{Experiment 5 — High-Frequency Disturbance. Top-left: hexbin density map; top-right: raw attractor plot; bottom: box-counting dimension log-log plot.}
    \label{fig:high_freq_disturbance}
\end{figure}

\subsection{Experiment 6: Spiral Rotation}
\label{subsec:exp_spiral}

The sixth experiment, named \textit{spiral\_rotation}, was designed to investigate how rotational dynamics and sinusoidal function interactions can produce densely packed, spiral like attractors. The system utilizes three functions \texttt{f3}, \texttt{f7}, and \texttt{f11} each incorporating sine or hyperbolic sine terms with balanced selection probabilities $(0.4,\; 0.3,\; 0.3)$. These functions collectively introduce both angular displacement and nonlinear modulation across the $x$ and $y$ axes.

The resulting attractor (Figure~\ref{fig:spiral_rotation}) displays a stunningly cohesive structure, characterized by radial curvature and intricate internal spiraling. Unlike fragmented or outward-dispersed attractors, this configuration remains largely enclosed, forming a bounded domain filled with swirls and rotational paths. The attractor appears almost like a vortex: points tend to spiral inward, accumulate near the center, then escape tangentially only to be reabsorbed into new trajectories.

The density plot confirms the attractor's exceptional coherence. Unlike other configurations with fragmented or sparse outskirts, this system maintains relatively uniform density throughout its domain. High-concentration areas appear as tightly wound spirals, suggesting resonance or constructive interference among function compositions. This balanced dynamic contributes to the attractor’s visual stability and high point occupancy.

Most notably, this experiment yielded the highest estimated fractal dimension among all tested configurations: $\dim_B \approx 1.892$. This value approaches the theoretical upper bound for a two-dimensional fractal set without achieving full area filling. The log-log box-counting regression reveals a near-perfect linear relationship, with minimal error across scales—an indication of scale-invariance and deep geometric nesting within the attractor.

Overall, this experiment underscores the power of carefully selected trigonometric maps to produce richly structured, highly stable RNIFS attractors. The emergent spiral geometry, in particular, mirrors natural phenomena such as fluid vortices or rotating galaxies demonstrating the model’s potential for simulating organized, self-sustained complexity arising from simple stochastic rules.

\begin{figure}[H]
    \centering
    \includegraphics[width=\linewidth]{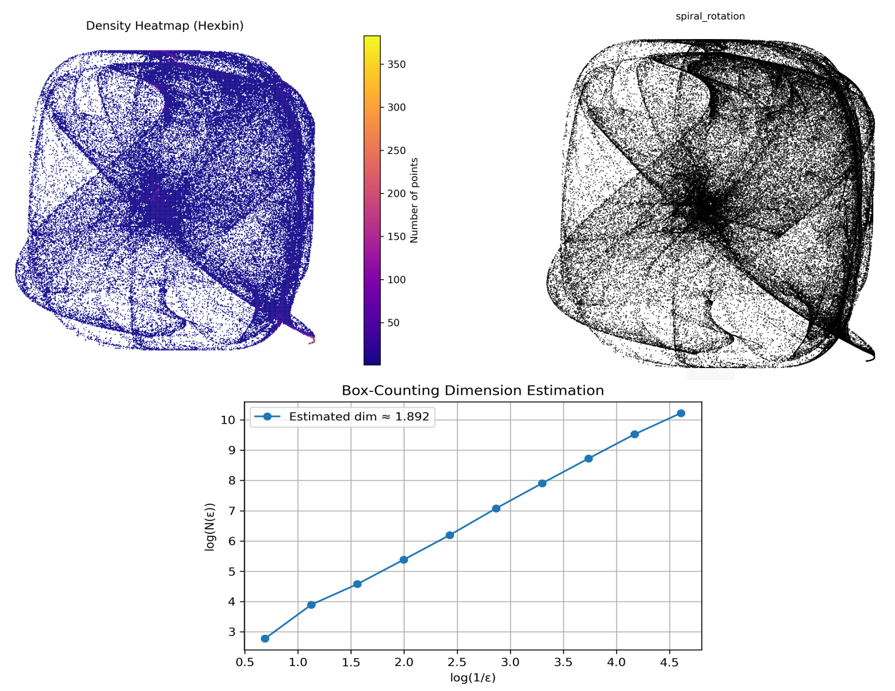}
    \caption{Experiment 6 — Spiral Rotation. Top-left: hexbin density map; top-right: raw attractor plot; bottom: box-counting dimension log-log plot.}
    \label{fig:spiral_rotation}
\end{figure}

\subsection{Experiment 7: Ultra-Resolution Analysis}
\label{subsec:exp_ultrares}

The seventh experiment, labeled \textit{ultra\_res\_analysis}, was designed with a singular goal: to probe the internal structure of RNIFS generated attractors with high numerical precision. Unlike previous simulations, this configuration was executed with a significantly larger number of points ($300{,}000$) and a prolonged burn in phase ($5{,}000$ iterations). The function set combines \texttt{f4}, \texttt{f5}, and \texttt{f8} each contributing nonlinear curvature and geometric deformation with a balanced probability distribution $(0.3,\; 0.4,\; 0.3)$.

The resulting attractor (Figure~\ref{fig:ultra_res_analysis}) is a dense and intricately layered structure, exhibiting multiple “shells” or concentric enclosures that interact non-trivially. While many RNIFS systems yield fragmented or branching patterns, this configuration stabilizes into a highly compact domain that reveals new geometrical sublayers as resolution increases. The shape resembles a network of partially inflated surfaces, each enclosing subregions of high recurrence.

The density plot provides further insight into this multi-scale organization. Bright ridges highlight orbital basins zones where trajectories orbit repeatedly due to function composition resonance. Meanwhile, faint lines and cavities reveal subtle disjunctions between zones, often caused by small numerical instabilities or rare function transitions. These micro-voids are not errors but emergent features of the RNIFS geometry, only visible at ultra-resolution.

The box-counting dimension was estimated at $\dim_B \approx 1.751$, a relatively high value confirming the attractor's space-filling tendency without complete area saturation. The log-log plot demonstrates excellent linearity, validating the presence of consistent self-similarity across several scales. Notably, the regression slope remains stable even at fine $\varepsilon$ values, suggesting that the attractor maintains structural depth at micro-levels rarely captured in lower-resolution simulations.

This experiment highlights the potential of RNIFS systems to generate attractors with deep, self-nested complexity when run under high-precision regimes. It affirms that the observed “fractality” is not superficial, but reflects robust multi-scale organization intrinsic to the interplay between nonlinear maps and probabilistic selection. Ultra-resolution analysis thus emerges as a crucial tool in uncovering latent geometrical behavior that may otherwise remain hidden.

\begin{figure}[H]
    \centering
    \includegraphics[width=\linewidth]{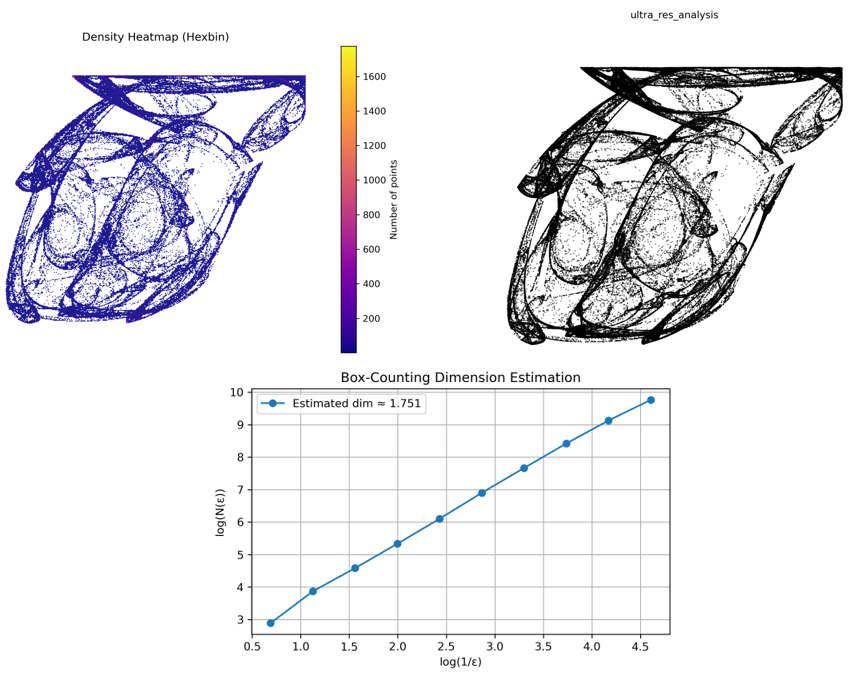}
    \caption{Experiment 7 — Ultra-Resolution Analysis. Top-left: hexbin density map; top-right: raw attractor plot; bottom: box-counting dimension log-log plot.}
    \label{fig:ultra_res_analysis}
\end{figure}

\subsection{Experiment 8: Webbed Structure}
\label{subsec:exp_webbed}

The final experiment in the series, titled \textit{webbed\_structure}, explores the emergence of fine-grained internal complexity through a dense, interlaced attractor. The system combines four nonlinear functions \texttt{f3}, \texttt{f5}, \texttt{f7}, and \texttt{f8} with equal selection probabilities $(0.25,\; 0.25,\; 0.25,\; 0.25)$. Each function contributes different oscillatory or folding behaviors, and their balanced distribution encourages uniform mixing rather than geometric dominance.

The resulting attractor (Figure~\ref{fig:webbed_structure}) displays a striking “webbed” configuration: a compact, almost quadrilateral boundary filled with numerous curved lines, loops, and overlapping sheets. The geometry is reminiscent of fibrous tissue or dynamic streamlines suspended in space. Unlike radial or spiraling attractors, the structure here is horizontally and vertically interlaced, giving it a layered, almost fabric-like appearance.

The hexbin density plot reveals an attractor rich in thin, high-density strands localized “threads” where orbits repeatedly collapse and fold. These threads intersect at multiple scales, generating a tapestry of overlapping zones, yet without the chaos observed in more volatile experiments. The attractor exhibits coherence without symmetry, and internal order without global regularity hallmarks of structured stochasticity.

Quantitatively, the box-counting dimension was estimated at $\dim_B \approx 1.769$, placing this attractor among the densest and most geometrically active in the study. The log-log regression is highly linear, confirming robust scaling behavior across scales. Notably, this value aligns with visual intuition: the attractor approaches space-filling behavior through repetition and crossing, rather than area saturation or radial diffusion.

This experiment highlights how RNIFS systems can give rise to internally connected, multi-threaded geometries when function combinations are chosen to complement rather than compete. The outcome is a self-organized system that exhibits intricate detail and statistical self-similarity, even in the absence of a dominant geometric signature. Such attractors may hold value in modeling interwoven phenomena ranging from network traffic flows to biological fiber systems.

\begin{figure}[H]
    \centering
    \includegraphics[width=\linewidth]{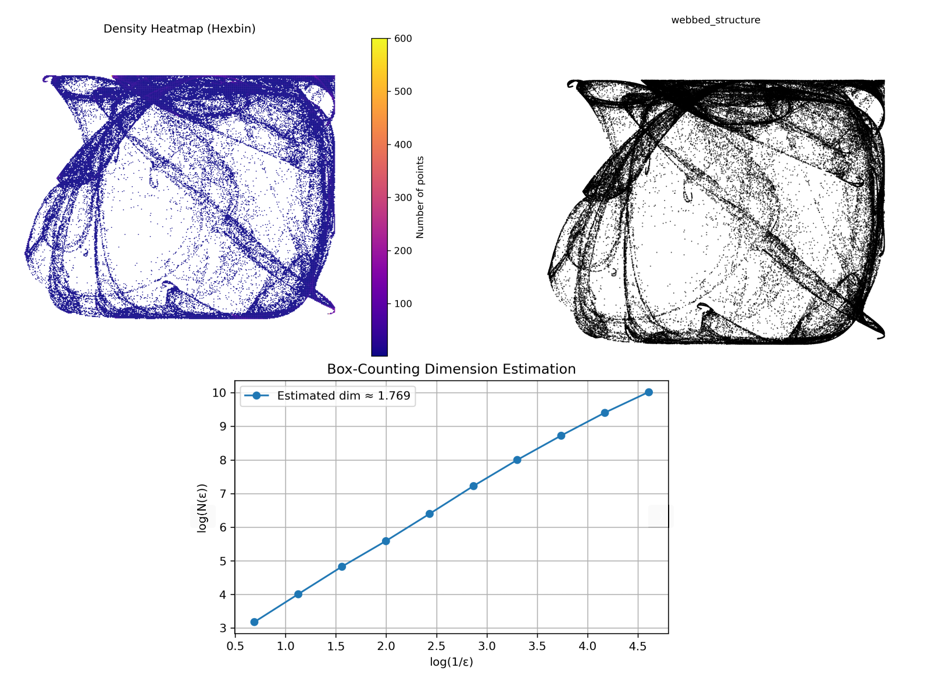}
    \caption{Experiment 8 — Webbed Structure. Top-left: hexbin density map; top-right: raw attractor plot; bottom: box-counting dimension log-log plot.}
    \label{fig:webbed_structure}
\end{figure}

\section{Case Study}
\label{sec:case_study}
\vspace{0.8em}

In this section, we examine how IFS can be viewed as a special case within the broader RNIFS framework. By doing so, we aim to validate the generality of RNIFS and demonstrate its capacity to reproduce, extend, and distort well-known fractal attractors through controlled nonlinear stochastic extensions.

\subsection{Classical IFS: The Sierpiński Triangle}
The Sierpiński triangle is a canonical example of a self-similar fractal constructed via deterministic IFS. It consists of three contractive affine maps on $\mathbb{R}^2$, each scaling the unit triangle by a factor of $1/2$ and shifting it to a distinct corner. Formally, the function family is:

\[
f_1(x, y) = \left( \tfrac{1}{2}x,\; \tfrac{1}{2}y \right),  \]
\[
f_2(x, y) = \left( \tfrac{1}{2}x + \tfrac{1}{2},\; \tfrac{1}{2}y \right), \quad \]
\[
f_3(x, y) = \left( \tfrac{1}{2}x + \tfrac{1}{4},\; \tfrac{1}{2}y + \tfrac{\sqrt{3}}{4} \right).
\]

Each function is applied with equal probability: $p_i = 1/3$. This deterministic IFS satisfies the contractivity and open set conditions, and its attractor is the well-known Sierpiński triangle, with Hausdorff dimension:

\[
\dim_H = \frac{\log 3}{\log 2} \approx 1.5849.
\]

\subsection{RNIFS Extension with Nonlinear Function}
To transition this model into the RNIFS framework, we add a nonlinear transformation defined as:

\[
f_4(x, y) = \left( \sin(\pi x) \cdot y,\; \cos(\pi y) \cdot x \right),
\] and redefine the function family $\mathcal{F}' = \{ f_1, f_2, f_3, f_4 \}$ with uniform probabilities $p_i = 1/4$ for all $i$. The inclusion of $f_4$ introduces strong nonlinearity and oscillatory behavior, violating the affine constraint of classical IFS.

This RNIFS extension retains the basic spatial structure of the original triangle, but adds significant internal distortion and stochastic fluctuation, resulting in a richer and less symmetric attractor.

From a theoretical standpoint, this example illustrates the following:

\begin{itemize}
    \item When all functions in an RNIFS are affine and fixed, the model collapses into a classical IFS.
    \item The inclusion of a single nonlinear function is sufficient to alter the geometry and dynamics of the attractor.
    \item The RNIFS retains a statistically stable attractor (supported by the invariant measure theory from Section~\ref{sec:existence_stability}), even when linearity and strict contractivity are relaxed.
\end{itemize}

Hence, classical IFS is not displaced by RNIFS, but rather embedded within it as a special case. The general RNIFS model offers a smooth continuum from well-understood deterministic structures to novel, richly structured, nonlinear random fractals.

\subsection{Experimental Comparison}
\label{sec:experimental_comparison}

To empirically validate the theoretical insights from the previous section, we conducted two experiments: one using the classical IFS representation of the Sierpiński triangle, and another using its RNIFS extension with the nonlinear transformation $f_4$. Both systems were simulated using Monte Carlo methods with $10^5$ iterations, discarding the first 100 points as burn-in.

Figure~\ref{fig:ifs_vs_rnifs} presents a side-by-side comparison of the classical and RNIFS-generated attractors. The left panel shows the canonical Sierpiński triangle with its perfectly symmetric and recursive voids. The right panel illustrates the RNIFS attractor obtained by introducing $f_4$, a nonlinear map with trigonometric dependencies.

\begin{figure}[H]
    \centering
     \includegraphics[width=\linewidth]{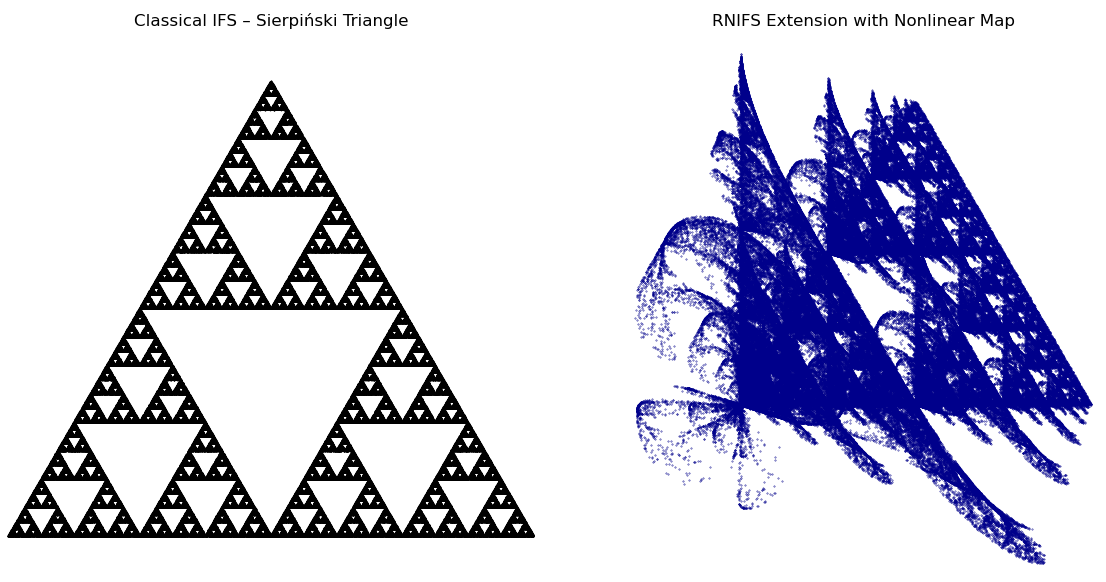}
    \caption{Left: Classical IFS (Sierpiński triangle). Right: RNIFS extension with nonlinear transformation.}
    \label{fig:ifs_vs_rnifs}
\end{figure}

We observe that the overall triangular geometry is preserved in the RNIFS case, but the internal structure becomes more chaotic, less symmetric, and more spatially complex. The attractor exhibits irregular clustering, curvature, and folding all stemming from the periodic and nonlinear nature of $f_4$. Despite these distortions, the attractor remains bounded and statistically stable, consistent with the theoretical guarantees of RNIFS dynamics.

\subsection{Fractal Dimension Estimate}

To quantify the geometric complexity of the RNIFS-generated attractor, we computed the box-counting dimension using a grid-based covering method at multiple scales. The result is plotted in Figure~\ref{fig:rnifs_boxdim}.

\begin{figure}[H]
    \centering
    \includegraphics[width=\linewidth]{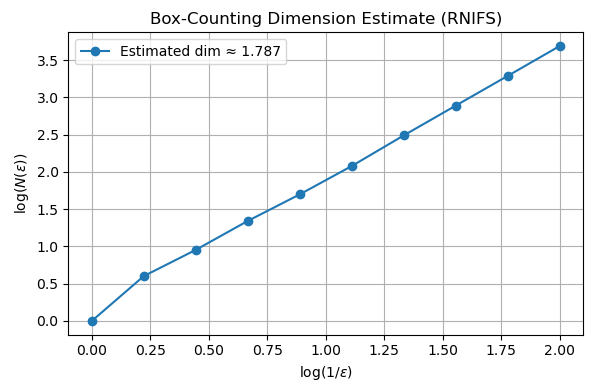}
    \caption{Box-counting dimension estimation for the RNIFS attractor. Estimated dimension: $\dim_B \approx 1.787$.}
    \label{fig:rnifs_boxdim}
\end{figure}

The dimension significantly exceeds that of the classical Sierpiński triangle ($\dim_H \approx 1.585$), confirming that the RNIFS configuration generates an attractor with higher space-filling capacity and structural richness. The log-log plot demonstrates strong linearity across multiple scales, reinforcing the presence of consistent fractal behavior despite the system’s stochasticity.

To this end, comparative experiment illustrates the power of RNIFS to extend classical IFS models both theoretically and empirically. While preserving the global shape of the original attractor, the RNIFS introduces localized distortion and structural diversity. The increased fractal dimension corroborates the enhanced complexity, offering a compelling argument for the broader applicability of RNIFS in modeling real-world phenomena beyond traditional deterministic frameworks.

\section{Conclusion}
\label{sec:conclusion}
\vspace{0.8em}

This paper has introduced and rigorously explored the framework of RNIFS, a powerful generalization of classical IFS models that integrates both nonlinearity and stochasticity. Through a combination of theoretical analysis and numerical experimentation, we established sufficient conditions for the existence and stability of fractal attractors generated by RNIFS. Specifically, we proved that under average contractivity and Lyapunov-like conditions, RNIFS admit unique invariant measures whose support forms statistically stable, self-similar structures.

Empirical simulations across diverse functional regimes revealed a wide spectrum of geometric behaviors, ranging from organized spirals and radial layers to chaotic plumes and web-like meshes. These attractors were quantitatively characterized using box-counting dimension estimates, which confirmed their fractal nature and highlighted the influence of function composition and probability distribution on spatial complexity. Notably, several attractors exhibited dimensions approaching 1.9, indicating near space-filling richness.

A comparative case study with the classical Sierpiński triangle further demonstrated that RNIFS can not only replicate known fractals as special cases but also extend them into richer, more irregular geometries through minimal nonlinear perturbations. This underscores the expressive power and generality of the RNIFS model.

Overall, our findings provide a strong foundation for further exploration of RNIFS in modeling complex, irregular phenomena across scientific domains ranging from biology and physics to cryptography and machine learning. Future work may focus on multi-scale entropy analysis, parameter sensitivity, and real-world applications where stochastic nonlinearity governs the system's evolution.

% ------------------------------------------------------------
% Declarations
% ------------------------------------------------------------
\section*{Declarations}
\vspace{0.8em}
\begin{itemize}
  \item \textbf{Funding:} Not applicable.
  
  \item \textbf{Conflict of Interest:} The authors declare that there is no conflict of interest.

\item \textbf{Availability of Data and Materials: } 
This study did not use any external datasets. All experimental results were generated through custom simulations as described in the manuscript. The full simulation code, along with configuration files and analysis scripts, is publicly available at:  
\href{https://github.com/drbouke/Mathematics/tree/main}{\texttt{https://github.com/drbouke/Mathematics}}.

  \begin{comment}
    
  \item \textbf{Author Contributions:} 
  \begin{itemize}
    \item \textbf{Mohamed Aly Bouke:} Conceptualization, Methodology, Validation, Formal analysis, Data curation, Writing – original draft, Writing – review \& editing, Visualization.
    \item \textbf{Azizol Abdullah:} Formal analysis, Supervision.
    \item \textbf{Nur Izura Udzir and Normalia Samian:} Supervision.
  \end{itemize}
  
  \end{comment}

  \item \textbf{Consent to Publish:} All authors have reviewed and approved the final version of the manuscript and have provided consent for its publication.

  \item \textbf{Ethics Approval:} Not applicable.
\end{itemize}

% ------------------------------------------------------------
% Author Biographies
% ------------------------------------------------------------
\vspace{4.6em}

\section*{Author Biographies}
\begin{tcolorbox}[biobox]
  \begin{wrapfigure}{l}{0.35\linewidth}
    \vspace{-0.5em}
    \includegraphics[width=\linewidth]{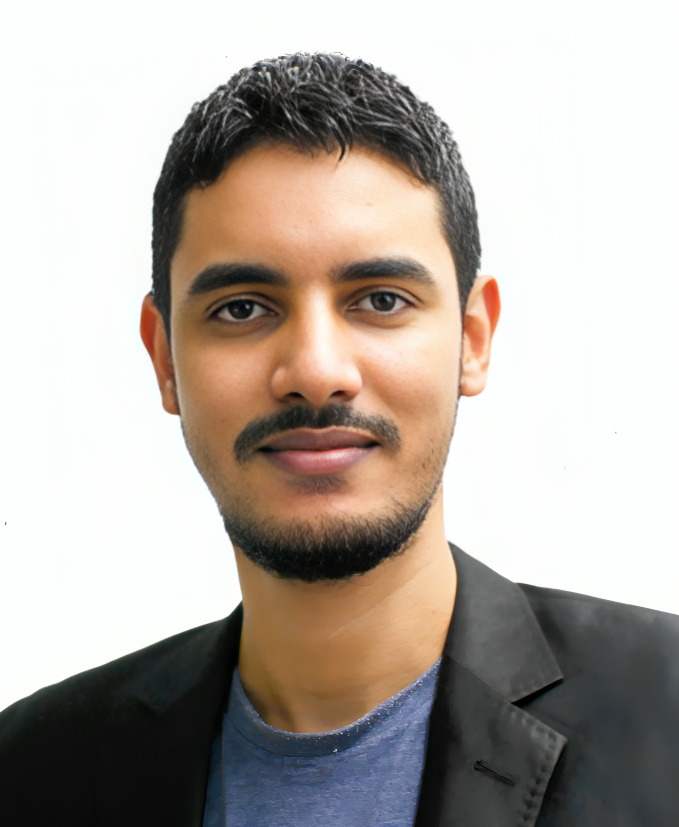}
  \end{wrapfigure}
  \textbf{Mohamed Aly Bouke}\,\href{https://orcid.org/0000-0003-3264-601X}{\includegraphics[height=1.8ex]{orcid.png}} is a researcher with interdisciplinary expertise across theoretical mathematics, computer science, artificial intelligence, and cryptography. He holds a Master’s and a Ph.D. in Information Security from Universiti Putra Malaysia and has a background in mathematics education. His academic work spans topics such as mathematical modeling, epistemic systems, AI architectures, and secure computation. Dr. Bouke is an active member of the \textit{Institute of Electrical and Electronics Engineers (IEEE)}, the \textit{International Information System Security Certification Consortium (ISC2)}, and the \textit{Institute for Systems and Technologies of Information, Control and Communication (INSTICC)}. His contributions include peer-reviewed publications, invited talks, and academic training in both technical and theoretical domains.

  \vspace{0.8em}
  \textit{Email: bouke@ieee.org}
\end{tcolorbox}

% ------------------------------------------------------------
% References
% ------------------------------------------------------------
\printbibliography

@article{hutchinson1981fractals,
  title={Fractals and self similarity},
  author={Hutchinson, John E},
  journal={Indiana University Mathematics Journal},
  volume={30},
  number={5},
  pages={713--747},
  year={1981},
  publisher={JSTOR}
}

@book{barnsley1988fractals,
  title={Fractals in nature: from characterization to simulation},
  author={Barnsley, Michael F and Devaney, Robert L and Mandelbrot, Benoit B and Peitgen, Heinz-Otto and Saupe, Dietmar and Voss, Richard F and Voss, Richard F},
  year={1988},
  publisher={Springer}
}

@article{schwarzenberger1990fractal,
  title={Fractal geometry: mathematical foundations and applications, by Kenneth Falconer. Pp 288.{\pounds} 19{\textperiodcentered} 95. 1990. ISBN 0-471-92287-0 (Wiley)},
  author={Schwarzenberger, Rolph},
  journal={The Mathematical Gazette},
  volume={74},
  number={469},
  pages={316--317},
  year={1990},
  publisher={Cambridge University Press}
}

@book{falconer2013fractal,
  title={Fractal geometry: mathematical foundations and applications},
  author={Falconer, Kenneth},
  year={2013},
  publisher={John Wiley \& Sons}
}

@article{diaconis1999iterated,
  title={Iterated random functions},
  author={Diaconis, Persi and Freedman, David},
  journal={SIAM review},
  volume={41},
  number={1},
  pages={45--76},
  year={1999},
  publisher={SIAM}
}

@article{walters1989yuri,
  title={Yuri Kifer, Ergodic theory of random transformations},
  author={Walters, Peter},
  year={1989}
}

@book{arnold1995random,
  title={Random dynamical systems},
  author={Arnold, Ludwig and Jones, Christopher KRT and Mischaikow, Konstantin and Raugel, Genevi{\`e}ve and Arnold, Ludwig},
  year={1995},
  publisher={Springer}
}

@book{iosifescu2009iterated,
  title={Iterated function systems: A critical survey},
  author={Iosifescu, Marius},
  year={2009},
  publisher={Univ.}
}

@article{ghosh2022iterated,
  title={Iterated function systems: A comprehensive survey},
  author={Ghosh, Ramen and Marecek, Jakub},
  journal={arXiv preprint arXiv:2211.14661},
  year={2022}
}

@article{kendall2017random,
  title={From random lines to metric spaces},
  author={Kendall, Wilfrid S},
  year={2017}
}

@article{arnold1997unfolding,
  title={The unfolding of dynamics in stochastic analysis},
  author={Arnold, Ludwig},
  journal={Computational and Applied Mathematics},
  volume={16},
  pages={3--26},
  year={1997},
  publisher={BIRKHAEUSER}
}

\end{document}